\numberwithin{equation}{section}
\newcommand{\E}{\mathds{E}}
\newcommand{\N}{\mathds{N}}
\newcommand{\cL}{\mathscr{L}}
\newcommand{\cF}{\mathscr{F}}
\newcommand{\applied}[2]{\langle #1,#2\rangle}
\DeclarePairedDelimiter\norm{\lVert}{\rVert}
\DeclarePairedDelimiter\abs{\lvert}{\rvert}
\newcommand{\argument}{\,\cdot\,}
\renewcommand{\phi}{\varphi}
\newcommand{\colonequiv}{\mathrel{\mathop:}\Leftrightarrow}
\newcommand{\dx}{\;\mathrm{d}}
\newcommand{\eps}{\varepsilon}
\DeclareMathOperator{\supp}{supp}
\definecolor{mygreen}{rgb}{0.1,0.75,0.2}
\newcommand{\Sigmainv}{\Sigma_{\mathrm{inv}}}
\newcommand{\Sigmasim}{\Sigma/\hspace{-0.3em}\sim}
\theoremstyle{definition}
\newtheorem{definition}{Definition}[section]
\newtheorem{remark}[definition]{Remark}
\theoremstyle{plain}
\newtheorem{proposition}[definition]{Proposition}
\newtheorem{lemma}[definition]{Lemma}
\newtheorem{theorem}[definition]{Theorem}
\begin{document}

\title{Convergence of Dynamics and the Perron-Frobenius Operator}
\author{Moritz Gerlach}
\address{Moritz Gerlach\\Universit\"at Potsdam\\Institut f\"ur Mathematik\\Karl-Liebknecht-Stra{\ss}e 24--25\\14476 Potsdam\\Germany}
\email{moritz.gerlach@uni-potsdam.de}

\begin{abstract}
	We complete the picture how the asymptotic behavior of a dynamical system is reflected by properties of the associated Perron-Frobenius operator.
	Our main result states that strong convergence of the powers of the Perron-Frobenius operator is 
	equivalent to setwise convergence of the underlying dynamic in the measure algebra.
	This situation is furthermore characterized by uniform mixing-like properties of the system.
\end{abstract}

\thanks{The author thanks Jochen Gl\"uck for a fruitful discussion on the results of Section \ref{sec:mixing} 
during his stay at the University of Potsdam in August 2016.}

\date{\today}
\subjclass[2010]{Primary: 37A05, Secondary: 28D05, 37A25, 47A35}
\keywords{Perron-Frobenius operator, exactness, mixing, convergence}
\maketitle

\section{Introduction and Preliminaries}

Let $(\Omega,\Sigma,\mu;\phi)$ be a measure preserving dynamical system, i.e.\ a probability space $(\Omega,\Sigma,\mu)$ endowed with a measurable transformation
$\phi \colon \Omega \to \Omega$ such that the push-forward measure $\phi_*\mu$ equals $\mu$.
The so-called \emph{Perron-Frobenius operator} $P\in \cL(L^1(\Omega,\Sigma,\mu))$ can be defined by the Radon-Nikodym theorem as the unique linear
and positive operator satisfying
\begin{align}
\label{eqn:Pdef}
 \int_A Pf \dx\mu  = \int_{\phi^{-1}(A)} f \dx\mu 
\end{align}
for all $f\in L^1(\Omega,\Sigma,\mu)$ and $A\in \Sigma$. 
One easily checks that its adjoint $P^*$ is the associated \emph{Koopman operator} $T\in \cL(L^\infty(\Omega,\Sigma,\mu))$
given by $Tf \coloneqq f\circ \phi$, see \cite[Prop 4.3.1]{ding2009}. 
In particular, $P$ is a bi-Markov operator, i.e.\  $P$ is positive, $P\mathds{1}=\mathds{1}$ and $P^*\mathds{1}=\mathds{1}$.
Since the system is measure preserving, $T$ actually operates on every $L^p(\Omega,\Sigma,\mu)$-space and
$P$ can also be obtained as the adjoint of $T$ defined on $L^2(\Omega,\Sigma,\mu)$, extended to $L^1(\Omega,\Sigma,\mu)$.

It is well-known that some mixing properties of a dynamical system are described by the asymptotic behavior of the powers of the
associated Perron-Frobenius operator. We recall two examples from \cite[Thm 4.4.2]{ding2009}:
Firstly, the measure preserving dynamical system $(\Omega,\Sigma,\mu;\phi)$ is \emph{ergodic}, meaning that the measure of
each set $A\in \Sigma$ such that $\phi^{-1}(A)$ equals $A$ up to a nullset is either $0$ or $1$, if and only if the fixed space of $P$
consists of constant functions only. Since $P$ is a Dunford-Schwartz operator and therefore mean ergodic \cite[Thm 8.24]{haase2015},
ergodicity of the system is furthermore characterized by strong convergence of the Ces\`aro averages of $P$ to $\mathds{1}\otimes \mathds{1}$, the 
bi-Markov projection onto the constant functions.
Secondly, the system is \emph{(strongly) mixing}, i.e.\
\begin{align}
\label{eqn:mixing}
 \lim_{n\to\infty} \mu(\phi^{-n}(A)\cap B) = \mu(A)\mu(B) \text{ for all } A,B \in \Sigma,
 \end{align}
if and only if the sequence of powers $(P^n)_{n\in\N}$ converges in the weak operator topology to the just mentioned projection $\mathds{1}\otimes \mathds{1}$.

In this article we are concerned with \emph{strong convergence} of the sequence $(P^n)_{n\in\N}$, i.e.\ convergence with respect to the strong operator topology,
and its relation to the asymptotic behavior of the underlying measure preserving dynamical system.
It seems that this situation has never yet been characterized by a mixing-like property analogously to \eqref{eqn:mixing}.
Such a characterization is provided in Section \ref{sec:mixing}: while it is easy to see that strong convergence of $(P^n)_{n\in\N}$ to the 
rank-one projection $\mathds{1}\otimes \mathds{1}$ is equivalent to the fact that the convergence in \eqref{eqn:mixing} is uniform in $A\in \Sigma$,
we furthermore show that this is in turn equivalent to the formally weaker 
property that for any $B\in \Sigma$ there exists $D\in \Sigma$ with $\mu(D)>0$ such that
\begin{align}
\label{eqn:locallyuniformlymixing}
 \lim_{n\to\infty} \sup_{A\in \Sigma\text{, }A\subseteq D} \abs[\big]{\mu(\phi^{-n}(A)\cap B)  - \mu(A)\mu(B) } = 0.
 \end{align}
Moreover, this property  is also related to exactness of the system, which is defined as follows.
For each $n\in\N$ set $\Sigma_n \coloneqq \{ \phi^{-n}(A) : A \in \Sigma \}$. We  define the sub-$\sigma$-algebra 
$\Sigma_\infty \coloneqq \bigcap_{n\in\N} \Sigma_n$ and denote its completion within $\Sigma$ by $\overline{\Sigma_\infty}$.
The measure preserving system $(\Omega,\Sigma,\mu;\phi)$ is called \emph{exact} if the measure of each element of
$\Sigma_\infty$ is either $0$ or $1$.
It follows from \cite[Thm 3.2.3 and 4.4.2]{ding2009}
that in case where the measure preserving dynamical system is \emph{bimeasurable}, i.e.\
$\phi^{-1}(A),\phi(A) \in \Sigma$ for every $A\in \Sigma$, the following assertions are equivalent:
\begin{enumerate}[(i)]
	\item $\lim \mu(\phi^n(A))=1$ for each $A\in \Sigma$ with $\mu(A)>0$.
	\item The sequence $(P^n)_{n\in\N}$ converges strongly to $\mathds{1}\otimes \mathds{1}$.
	\item The system is exact.
\end{enumerate}
Note that the equivalence of (ii) and (iii) follows from \cite{lin1971}[Cor 4.1].
In view of this characterization, we call the Frobenius-Perron operator $P$ \emph{exact} if assertion (ii) holds.

As a consequence of these equivalences we obtain that any bimeasurable exact system is mixing and thus ergodic.
However, if the system $(\Omega,\Sigma,\mu;\phi)$ is not ergodic,
the powers $(P^n)_{n\in\N}$ can still converge to a projection onto the fixed space of $P$, which is no longer one-dimensional.
In Section \ref{sec:main} we extend the above equivalences  to non-ergodic systems by showing 
that in general strong convergence of $(P^n)_{n\in\N}$ is equivalent to setwise convergences of the iterates $(\phi^n)_{n\in\N}$ in the measure algebra.

In order to make the last statement precise, we have to introduce a little more notation. 
On the $\sigma$-algebra $\Sigma$ we define the equivalence relation
\[ A \sim B \colonequiv \mu(A\triangle B)=\mu(A\setminus B) + \mu(B\setminus A) =0 \]
and denote by $\Sigmasim$ the set of its equivalence classes. Note that $\Sigmasim$ is called the \emph{measure algebra},
which is a complete metric space with respect to the metric $d(A,B) \coloneqq \mu(A\triangle B)$, cf.\ \cite[Thm 1.12.6]{bogachev2007}.
For the sake of simplicity we omit the distinction in notation between a set $A\in \Sigma$ and its equivalence class in $\Sigmasim$.
Furthermore, we denote by 
\[ \Sigmainv\coloneqq \{ A\in \Sigma : A = \phi^{-1}(A) \}\]
the sub-$\sigma$-algebra of $\phi$-invariant sets and by $\overline{\Sigmainv}$ its completion within $\Sigma$.
Then clearly $\Sigmainv \subseteq \Sigma_\infty$ and therefore $\overline{\Sigmainv}\subseteq \overline{\Sigma_\infty}$.

For any $A\subseteq \Omega$ we define the \emph{minimal invariant superset} $A^*$ as the intersection of all $B\subseteq \Omega$ containing $A$ such
that $\phi^{-1}(B) = B$. If $\phi$ is bimeasurable, then the minimal invariant superset of any $A\in \Sigma$ is automatically measurable, 
see Lemma \ref{lem:A*measurable}.
Let us emphasize that our definition of $A^*$ is different from that in the literature, where
$A^*$ is usually defined as the minimal element of $\Sigmainv/\hspace{-0.3em}\sim$ that contains $A$ up to a nullset, cf.\ \cite[p.\ 21]{foguel1969}.
It is not difficult to see that both definitions coincide up to a nullset if and only if $\phi$ preserves nullsets.

Finally, let us denote by $\E(f| \Sigmainv)$ the conditional expectation of $f\in L^1(\Omega,\Sigma,\mu)$ with respect to $\Sigmainv$, i.e.\
the unique element in $L^1(\Omega,\Sigma,\mu)$ that contains a $\Sigmainv$-measurable representative and that 
\[ \int_B \E(f| \Sigmainv) \dx\mu = \int_B f \dx\mu \]
for all $B\in \Sigmainv$. It is also worth noting that $\E(f|\Sigmainv) = \E(f|\overline{\Sigmainv})$ in the sense of $L^1(\Omega,\Sigma,\mu)$.

Our main theorem now reads as follows.
\begin{theorem}
\label{thm:main}
Let $(\Omega,\Sigma,\mu;\phi)$ be a bimeasurable measure preserving dynamical system $(\Omega,\Sigma,\mu;\phi)$.
Then the following assertions are equivalent.
	\begin{enumerate}[(i)]
		\item For all $A\in \Sigma$ the sequence $(\phi^{n}(A))_{n\in\N}$ converges in $\Sigmasim$.
		\item For all $f\in L^1(\Omega,\Sigma,\mu)$ the sequence $(P^nf)_{n\in\N}$ converges in $L^1(\Omega,\Sigma,\mu)$.
		\item $\overline{\Sigma_\infty} = \overline{\Sigmainv}$.
	\end{enumerate}
	If these equivalent assertions hold, then the occurring limits can be identified as follows: 
	for each $f\in L^1(\Omega,\Sigma,\mu)$ the sequence $(P^nf)_{n\in\N}$ 
	converges to $\E(f| \Sigmainv)$ and for each $A\in \Sigma$ the sequence $(\phi^n(A))_{n\in\N}$  converges in $\Sigmasim$
	to the minimal invariant superset $A^*$.
\end{theorem}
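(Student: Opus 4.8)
The plan is to establish (ii)$\Leftrightarrow$(iii) by operator-theoretic means, and then to bridge to the set-theoretic statement (i) through the explicit relation between $P^n\mathds{1}_A$ and the forward image $\phi^n(A)$. The backbone for (ii)$\Leftrightarrow$(iii) is the pair of identities $P^nT^n=\id$ and $T^nP^n=\E(\argument\,|\,\Sigma_n)$, valid because $T$ is an $L^1$-isometry whose $n$-th power has range $L^1(\Omega,\Sigma_n,\mu)$. Since $\Sigma_n\searrow\Sigma_\infty$, reverse martingale convergence gives $\E(f\,|\,\Sigma_n)\to\E(f\,|\,\Sigma_\infty)$ in $L^1$, and the isometry property yields the crucial norm identity $\norm{P^ng}_1=\norm{T^nP^ng}_1=\norm{\E(g\,|\,\Sigma_n)}_1\to\norm{\E(g\,|\,\Sigma_\infty)}_1$. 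Combined with $\fix(P)=L^1(\Omega,\overline{\Sigmainv},\mu)$ and the mean ergodicity of $P$ (whose ergodic projection is $\E(\argument\,|\,\Sigmainv)$), this identity shows that $P^ng\to 0$ exactly when $\E(g\,|\,\Sigma_\infty)=0$. Writing $f=\E(f\,|\,\Sigmainv)+g$ with $g$ in the kernel of the ergodic projection then reduces (ii) to the statement $\E(g\,|\,\Sigma_\infty)=\E(g\,|\,\Sigmainv)$ for all $g$, i.e.\ to (iii), and simultaneously identifies the limit of $(P^nf)_{n\in\N}$ as $\E(f\,|\,\Sigmainv)$.

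To connect with (i) I would first record two elementary facts: $\supp(P^n\mathds{1}_A)\subseteq\phi^n(A)$ with $\int_{\phi^n(A)}P^n\mathds{1}_A\dx\mu=\mu(A)$, and $\phi^n(A)\subseteq A^*$ since $A^*$ is a forward-invariant superset of $A$. The decisive observation is that for $C\in\Sigma_\infty$ one has $\phi^{-n}(\phi^n(C))=C$, whence $\mu(\phi^n(C))=\mu(C)$; combining this with the support bound forces the density to be an indicator, $P^n\mathds{1}_C=\mathds{1}_{\phi^n(C)}$ exactly. Thus for $C\in\Sigma_\infty$ convergence of $(\phi^n(C))_{n\in\N}$ in $\Sigmasim$ coincides with convergence of $(P^n\mathds{1}_C)_{n\in\N}$ in $L^1$. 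Assuming (i), the limit $\lim P^n\mathds{1}_C$ is a fixed vector of $P$ and, being an $L^1$-limit of indicators, is itself an indicator $\mathds{1}_B$ with $B\in\overline{\Sigmainv}$; applying $T^n$ and using $\E(\mathds{1}_C\,|\,\Sigma_n)=\mathds{1}_C$ together with $T^n\mathds{1}_B=\mathds{1}_B$ gives $\norm{\mathds{1}_C-\mathds{1}_B}_1=\norm{P^n\mathds{1}_C-\mathds{1}_B}_1\to0$, so $C=B\in\overline{\Sigmainv}$. This proves $\Sigma_\infty\subseteq\overline{\Sigmainv}$, which is (iii).

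For the converse (iii)$\Rightarrow$(i) and the identification of the limit, the containment $\phi^n(A)\subseteq A^*$ reduces everything to the measure statement $\mu(\phi^n(A))\to\mu(A^*)$, since then $\mu(\phi^n(A)\triangle A^*)=\mu(A^*)-\mu(\phi^n(A))\to0$. Here $\mu(\phi^n(A))=\mu(\phi^{-n}(\phi^n(A)))$ increases to $\mu(H_\infty)$ with $H_\infty=\bigcup_n\phi^{-n}(\phi^n(A))$, and a short computation with conditional expectations identifies $\mu(H_\infty)=\mu(\{\E(\mathds{1}_A\,|\,\Sigma_\infty)>0\})$, which under (iii) is the measure of the smallest \emph{almost} invariant superset of $A$. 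The main obstacle is the reconciliation of this almost-invariant hull with the set-theoretic minimal invariant superset $A^*$: the two may differ on a nullset when $\phi$ does not preserve nullsets, so a purely set-theoretic argument is unavailable. I expect to resolve this through the measure identity $\mu(A^*)=\mu(\bigcup_{k\ge0}\phi^k(A))$ (which follows from the representation $A^*=\bigcup_n\phi^{-n}(\bigcup_k\phi^k(A))$, the measurability of $A^*$ from Lemma~\ref{lem:A*measurable}, and measure preservation), combined with the vanishing $\mu(\phi^k(A)\cap\{\E(\mathds{1}_A\,|\,\Sigmainv)=0\})=0$ under (iii); these sandwich $\mu(\bigcup_k\phi^k(A))$, the almost-invariant hull, and $\mu(A^*)$ and force all three to agree. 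This pins the limit of $(\phi^n(A))_{n\in\N}$ to $A^*$ and closes the cycle of equivalences.
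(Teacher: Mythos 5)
Your arguments for (ii)$\Leftrightarrow$(iii) (via the identities $P^nT^n=\id$ and $T^nP^n=\E(\argument\,|\,\Sigma_n)$ plus reverse martingale convergence, instead of the paper's appeal to Lin's theorem, Theorem~\ref{thm:Pnfconvergence}) and for (i)$\Rightarrow$(iii) (via the exact identity $P^n\mathds{1}_C=\mathds{1}_{\phi^n(C)}$ for $C\in\Sigma_\infty$ and the isometry of $T$, instead of Lemma~\ref{lem:sigmainftyinv}) are correct, self-contained, and genuinely different from the paper's; the first also correctly identifies $\lim P^nf=\E(f\,|\,\Sigmainv)$.

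The last part, (iii)$\Rightarrow$(i) with the identification $\lim\phi^n(A)=A^*$, has a genuine gap: both identities on which your ``sandwich'' rests are false, and they are refuted by the paper's own example from the introduction. Take $\Omega=\{1,2,3\}$ with $\mu(1)=\mu(3)=1/2$, $\mu(2)=0$, $\phi(1)=1$, $\phi(2)=\phi(3)=3$, and $A=\{1,2\}$. Here $\Sigmainv=\Sigma_\infty=\{\emptyset,\{1\},\{2,3\},\Omega\}$, so (iii) holds, and $\E(\mathds{1}_A\,|\,\Sigma_\infty)=\E(\mathds{1}_A\,|\,\Sigmainv)=\mathds{1}_{\{1\}}$. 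However $\phi^n(A)=\{1,3\}$ and $H_n\coloneqq\phi^{-n}(\phi^n(A))=\Omega$ for all $n\geq1$, so $\mu(H_\infty)=1\neq\tfrac12=\mu(\{\E(\mathds{1}_A\,|\,\Sigma_\infty)>0\})$, contradicting your identification of $\mu(H_\infty)$; likewise $\mu(\phi^k(A)\cap\{\E(\mathds{1}_A\,|\,\Sigmainv)=0\})=\mu(\{3\})=\tfrac12\neq0$, contradicting the claimed vanishing. Worse, the quantities you intend to ``force to agree'' genuinely disagree: the minimal element of $\Sigmainv/\hspace{-0.3em}\sim$ containing $A$ up to a nullset is $\{1\}$, of measure $\tfrac12$, while $\mu(A^*)=\mu(\lim\phi^n(A))=1$. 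The root cause is exactly the phenomenon the paper emphasizes: $\phi$ need not preserve nullsets, so $H_n$ (the smallest set of $\Sigma_n$ containing $A$ \emph{exactly}) can be strictly larger in measure than $\supp\E(\mathds{1}_A\,|\,\Sigma_n)$ (the smallest such set \emph{up to nullsets}); more fundamentally, no quantity computed from the equivalence class of $\mathds{1}_A$ alone, such as $\E(\mathds{1}_A\,|\,\Sigmainv)$, can determine the limit $A^*$, since that limit changes when $A$ is modified by a nullset.

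A working repair needs the re-centering device of the paper's proof of (ii)$\Rightarrow$(i): apply the support inclusions to every forward image $\phi^m(A)$ separately, not only to $A$. Lemma~\ref{lem:averagesexpectation} (the case $m=0$) gives $\phi^m(A)\subseteq\supp\E(\mathds{1}_{\phi^m(A)}\,|\,\Sigmainv)$ up to a nullset, while $\supp P^n\mathds{1}_{\phi^m(A)}\subseteq\phi^{n+m}(A)$ up to a nullset and, under (ii) (which you have already shown equivalent to (iii)), $\supp P^n\mathds{1}_{\phi^m(A)}\to\supp\E(\mathds{1}_{\phi^m(A)}\,|\,\Sigmainv)$ in $\Sigmasim$; together these yield $\mu\bigl(\phi^m(A)\setminus\phi^{n+m}(A)\bigr)\to0$ for every fixed $m$, and an exhaustion of $U=\bigcup_{m}\phi^m(A)$ by finitely many of the $\phi^m(A)$ then gives $\phi^n(A)\to U\sim A^*$ by Lemma~\ref{lem:A*measurable}. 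Your reduction to the statement $\mu(\phi^n(A))\to\mu(A^*)$ is valid, but it must be filled by this per-$m$ argument rather than by conditional expectations of $\mathds{1}_A$.
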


Let us remark that the equivalence of (ii) and (iii) is not new, but is essentially contained in \cite{lin1971} and \cite{derriennic1976}.

It is also worth mentioning that in Theorem \ref{thm:main} $\phi$ is not assumed to preserve nullsets, 
i.e.\ modifying $A$ by a nullset will in general change the sequence $(\phi^n(A))$ and its limit in $\Sigmasim$.
Moreover, when starting with a transformation of the measure algebra $\phi^* \colon \Sigmasim \to \Sigmasim$ 
that induces an exact Perron-Frobenius operator and possesses bimeasurable liftings $\phi\colon \Omega \to \Omega$, 
Theorem \ref{thm:main} states that the convergence of $(\phi^n(A))$ does not depend on the choice of the 
lifting, whereas its limit does.
In order to illustrate this phenomenon consider the following simple example:
Let $\Omega\coloneqq\{1,2,3\}$ be endowed with the probability measure $\mu(1)\coloneqq \mu(3)\coloneqq 1/2$ and $\mu(2) \coloneqq 0$.
Then $\phi(1)\coloneqq 1$, $\phi(2)\coloneqq \phi(3) \coloneqq 3$ defines a bimeasurable measure preserving transformation $\phi\colon \Omega\to \Omega$.
For $A\coloneqq \{1,2\}$ we have $\phi^n(A) = \{1,3\}$ for every $n\in\N$ and therefore $\lim \phi^n(A) = \{1,3\}$ in $\Sigmasim$. 
Note that $\{1,3\}\sim\Omega$ and $\Omega$ is in fact the minimal invariant superset of $A$ with respect to the above definition.
On the other hand, $A\sim \{1\}$ and $\phi^n(\{1\}) = \{1\}$ for each $n\in\N$. However, $\{1\}$ is essentially different from $\Omega$.

Let us conclude this introduction with a short note on bimeasurability. 
It was proven in \cite{purves1966}, see also \cite{mauldin1981}, that a Borel-measurable function $\phi \colon X \to Y$
between two Polish spaces $X$ and $Y$ preserves Borel sets if and only if 
$\phi^{-1}(\{y\})$ is at most countable for all but at most countably many $y\in Y$.
For instance, the transformation $\phi(x) \coloneqq 2x \mod 1$ is
bimeasurable on $[0,1]$ with respect to the Borel-$\sigma$-algebra.
In some situations bimeasurability can be seen as a reasonable substitute for invertibility, cf.\ \cite{rice1978}.
However, if $\phi$ is bimeasurable and bijective, then $P=T^{-1}$ and the powers of $P$ can thus not be strongly convergent unless $P=I$.
In fact, if $P$ is strongly convergent, it follows from 
\[ \norm{f-Pf} \leq \norm{T^n}\cdot \norm{P^nf - P^{n+1}f} \to 0 \quad (n\to\infty) \]
that $Pf=f$ for any $f$.

\section{Uniform Mixing Properties}
\label{sec:mixing}

In the following we characterize strong convergence of the powers of the Perron-Frobenius operator $P$ by mixing-like properties
of the underlying measure preserving dynamical system $(\Omega,\Sigma,\mu;\phi)$.
In doing so, we make use of lower bound techniques originally invented by Lasota and Yorke in \cite{lasota1982, lasota1983}
to provide a sufficient condition for exactness of $P$. 
Ding extended this approach in \cite{ding2003} to characterize strong convergence of the sequence $(P^n)_{n\in\N}$ in
general. He proved that the limit $\lim_{n\to\infty} P^nf$ exists for any $f\in L^1(\Omega,\Sigma,\mu)$ if and only if for each $0 < f\in L^1(\Omega,\Sigma,\mu)$ 
there exists some 
$0 < h \in L^1(\Omega,\Sigma,\mu)$ such that 
\[ \lim_{n\to\infty} \norm{ (P^nf -h)^- }_{L^1} = 0,\]
where we use the notation $g=g^+ - g^-$ for the positive and negative part of a function $g\in L^1$.
This work has not yet gotten the attention it deserves and it acts as the key to the results in this section.
We also refer to \cite{gerlach2016} for a detailed discussion and further generalizations of lower bound techniques.

We start with a characterization of strong convergence of $(P^n)_{n\in\N}$ in general.
\begin{proposition}
\label{prop:convergencemixing}
Let $(\Omega,\Sigma,\mu;\phi)$ be a measure preserving dynamical system with associated Perron-Frobenius operator $P\in \cL(L^1(\Omega,\Sigma,\mu))$.
Then the following assertions are equivalent:
\begin{enumerate}[(i)]
\item The sequence $(P^n)_{n\in\N}$ converges with respect to the strong operator topology.
\item For each $B\in \Sigma$ with $\mu(B)>0$ there exists $D\in \Sigma$ with $\mu(D)>0$ and a constant $c>0$ such that
\[ \liminf_{n\to\infty} \inf_{A\in \Sigma} \bigr( \mu( \phi^{-n}(A) \cap B) - c\cdot \mu(D\cap A) \bigl) \geq 0.\]
\end{enumerate}
\end{proposition}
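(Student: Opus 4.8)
The plan is to translate condition (ii) into the language of Ding's lower bound characterization recalled above, applied to indicator functions, and then to bridge between indicators and arbitrary positive $L^1$-functions using positivity of $P$.

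First I would rewrite the quantity appearing in (ii). Iterating the defining relation \eqref{eqn:Pdef} yields $\int_A P^n f \dx\mu = \int_{\phi^{-n}(A)} f \dx\mu$ for all $f\in L^1(\Omega,\Sigma,\mu)$, $A\in\Sigma$ and $n\in\N$; in particular $\mu(\phi^{-n}(A)\cap B) = \int_A P^n\mathds{1}_B \dx\mu$. Writing also $c\cdot\mu(D\cap A) = \int_A c\mathds{1}_D\dx\mu$, the bracket in (ii) becomes $\int_A (P^n\mathds{1}_B - c\mathds{1}_D)\dx\mu$. Since $\int_A g\dx\mu$ is minimized over $A\in\Sigma$ by the choice $A=\{g<0\}$, this infimum equals $-\norm{(P^n\mathds{1}_B - c\mathds{1}_D)^-}_{L^1}$. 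Hence the $\liminf$-inequality in (ii) is equivalent to the statement that $\norm{(P^n\mathds{1}_B - c\mathds{1}_D)^-}_{L^1}\to 0$. Thus (ii) says precisely that Ding's lower bound condition holds for every $f=\mathds{1}_B$ with $\mu(B)>0$, with a bound $h$ of the special form $c\mathds{1}_D$.

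For the implication (i)$\Rightarrow$(ii), fix $B$ with $\mu(B)>0$. By (i) the sequence $P^n\mathds{1}_B$ converges in $L^1(\Omega,\Sigma,\mu)$ to some $g$; since each $P^n$ is positive and integral-preserving (which follows from \eqref{eqn:Pdef} with $A=\Omega$), we have $g\geq 0$ and $\int g\dx\mu = \mu(B)>0$, so $g$ is a nonzero positive function. Choosing $c>0$ small enough that $D\coloneqq\{g\geq c\}$ has positive measure gives $g\geq c\mathds{1}_D$, that is $(g-c\mathds{1}_D)^-=0$. As the map $u\mapsto u^-$ is nonexpansive on $L^1(\Omega,\Sigma,\mu)$, the reformulated condition follows from $\norm{(P^n\mathds{1}_B - c\mathds{1}_D)^-}_{L^1}\to\norm{(g-c\mathds{1}_D)^-}_{L^1}=0$.

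For the converse (ii)$\Rightarrow$(i), I would invoke Ding's theorem, so that it suffices to verify its lower bound condition for every $0<f\in L^1(\Omega,\Sigma,\mu)$. Given such $f$, there is a constant $\delta>0$ and a set $B$ with $\mu(B)>0$ such that $f\geq\delta\mathds{1}_B$ (take $B=\{f\geq\delta\}$ for a suitable $\delta$). Positivity of $P$ then gives $P^nf\geq\delta P^n\mathds{1}_B$ for all $n$. Applying (ii) to $B$ produces $D$ and $c$ with $\norm{(P^n\mathds{1}_B-c\mathds{1}_D)^-}_{L^1}\to 0$; setting $h\coloneqq\delta c\mathds{1}_D$, a nonzero positive function, the estimate $P^nf - h\geq\delta(P^n\mathds{1}_B - c\mathds{1}_D)$ together with the order-reversing property $X\geq Y\Rightarrow X^-\leq Y^-$ yields $(P^nf-h)^-\leq\delta(P^n\mathds{1}_B-c\mathds{1}_D)^-$ and hence $\norm{(P^nf-h)^-}_{L^1}\to 0$. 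Ding's characterization then delivers strong convergence of $(P^n)_{n\in\N}$.

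The two substantive steps are the duality reformulation of the first paragraph, namely recognizing the set-function infimum as an $L^1$-norm of a negative part via \eqref{eqn:Pdef}, and the domination argument of the last paragraph that propagates the lower bound from a single indicator $\mathds{1}_B\leq\delta^{-1}f$ to $f$ itself. I expect the latter to be the main point, since it is exactly where positivity of $P$ and the special product form $h=c\mathds{1}_D$ of the bound are used to match Ding's hypothesis.
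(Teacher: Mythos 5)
Your proposal is correct and follows essentially the same route as the paper's proof: both directions hinge on translating the set-function infimum into $-\norm{(P^n\mathds{1}_B - c\mathds{1}_D)^-}_{L^1}$ via \eqref{eqn:Pdef}, extracting a lower bound $c\mathds{1}_D$ from the limit of $P^n\mathds{1}_B$ for (i)$\Rightarrow$(ii), and using positivity of $P$ to dominate $(P^nf - \delta c\mathds{1}_D)^-$ by $\delta(P^n\mathds{1}_B - c\mathds{1}_D)^-$ before invoking Ding's theorem for (ii)$\Rightarrow$(i). The only differences are cosmetic (exact identity versus one-sided estimate, order-reversal of the negative part versus its subadditivity).
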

\begin{proof}
	(i) $\Rightarrow$ (ii): Let $(P^n)_{n\in\N}$ be strongly convergent and let $B\in \Sigma$ such that $\mu(B)>0$.
	Then the limit $f_B \coloneqq \lim_{n\to\infty} P^n \mathds{1}_B$
	is a positive function with $\norm{f_B}_{L^1} = \mu(B) >0$. Hence, there exists $D\in \Sigma$ with $\mu(D)>0$ and a constant $c>0$ such that
	$f_B \geq c\mathds{1}_D$; in particular $\lim \norm{(P^n \mathds{1}_B - c\mathds{1}_D)^-}_{L^1} = 0$.
	Since 
	\begin{align*}
	 &\inf_{A\in \Sigma} \bigr( \mu( \phi^{-n}(A) \cap B) - c\cdot \mu(D\cap A) \bigl) = \inf_{A\in \Sigma} \int_A \bigl( P^n \mathds{1}_B - c\mathds{1}_D\bigr) \dx\mu \\
	&\qquad \geq \inf_{A\in \Sigma} \int_A \bigl( P^n \mathds{1}_B - c\mathds{1}_D\bigr)^+ \dx\mu - \int_\Omega \bigl(P^n \mathds{1}_B - c\mathds{1}_D\bigr)^- \dx\mu
	\end{align*}
	for each $n\in\N$, assertion (ii) follows.

	(ii) $\Rightarrow$ (i): Let $0<f\in L^1(\Omega,\Sigma,\mu)$ and choose $B\in\Sigma$ with $\mu(B) >0$ and $d>0$ such that $f \geq d\mathds{1}_B$.
	Now pick $D \in \Sigma$ and $c>0$ according to assertion (ii). By \eqref{eqn:Pdef}, the definition of $P$, we obtain
	\begin{align*} \liminf_{n\to\infty} - \int_\Omega \bigl( P^n \mathds{1}_B - c\mathds{1}_D\bigr)^- 
	&= \liminf_{n\to\infty} \inf_{A\in \Sigma} \int_A \bigl( P^n \mathds{1}_B - c \mathds{1}_D \bigr) \dx\mu \\
	&= \liminf_{n\to\infty} \inf_{A\in \Sigma} \bigr( \mu( \phi^{-n}(A) \cap B) - c\cdot \mu(D\cap A) \bigl) \geq 0
	\end{align*}
	and hence that $\lim \norm{ (P^n \mathds{1}_B -c\mathds{1}_D)^-}_{L^1} = 0$. Now it follows from
	\[ \bigl( P^n f - dc\mathds{1}_D\bigr)^- \leq \bigl( P^n f - dP^n \mathds{1}_B\bigr)^- + \bigl( dP^n \mathds{1}_B - dc\mathds{1}_D\bigr)^- 
	= d \bigl(P^n \mathds{1}_B-c\mathds{1}_D\bigr)^-\]
	that $\lim \norm{(P^n f- cd\mathds{1}_D )^-}_{L^1}=0$. Therefore, we obtain from Ding's theorem \cite[Thm 1.1]{ding2003} that
	$(P^n)_{n\in\N}$ converges with respect to the strong operator topology.
\end{proof}

Next we characterize exactness of $P$ by a mixing-like properties of the dynamical system.

\begin{theorem}
\label{thm:exactmixing}
Let $(\Omega,\Sigma,\mu;\phi)$ be a measure preserving dynamical system with associated Perron-Frobenius operator $P\in \cL(L^1(\Omega,\Sigma,\mu))$.
Then the following assertions are equivalent:
\begin{enumerate}[(i)]
	\item The operator $P$ is exact, i.e.\ $\lim P^n f = \int_\Omega f \dx\mu \cdot \mathds{1}$ for each $f\in L^1(\Omega,\Sigma,\mu)$.
	\item For each $B\in \Sigma$ 
	\[ \lim_{n\to\infty}\sup_{A\in \Sigma} \abs[\big]{\mu(\phi^{-n}(A)\cap B) - \mu(A)\mu(B) } = 0.\]
	\item For each $B\in \Sigma$ there exists $D\in \Sigma$ with $\mu(D)>0$ such that 
	\[ \lim_{n\to\infty}\sup_{A\in \Sigma_D} \abs[\big]{\mu(\phi^{-n}(A)\cap B) - \mu(A)\mu(B) } = 0,\]
	where $\Sigma_D \coloneqq \{ A\cap D : A\in \Sigma \}$ denotes the trace $\sigma$-algebra.
\end{enumerate}
\end{theorem}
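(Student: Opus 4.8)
The plan is to establish the cycle (i) $\Rightarrow$ (ii) $\Rightarrow$ (iii) $\Rightarrow$ (i), the first two implications being quick and the last one carrying the real content. Everything rests on the elementary identity
\[ \mu(\phi^{-n}(A)\cap B) - \mu(A)\mu(B) = \int_A \bigl( P^n\mathds{1}_B - \mu(B)\mathds{1}\bigr)\dx\mu, \]
which follows at once from the defining property \eqref{eqn:Pdef} of $P$ together with $\int_\Omega P^n\mathds{1}_B\dx\mu = \mu(B)$ (valid since $P^*\mathds{1}=\mathds{1}$). For (i) $\Rightarrow$ (ii) I would simply note that $\sup_{A\in \Sigma}\abs{\int_A g\dx\mu}\leq \norm{g}_{L^1}$ for every $g\in L^1$, so that the supremum in (ii) is dominated by $\norm{P^n\mathds{1}_B - \mu(B)\mathds{1}}_{L^1}$, which tends to $0$ by exactness. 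The implication (ii) $\Rightarrow$ (iii) is trivial: one takes $D=\Omega$, so that $\Sigma_D=\Sigma$ and $\mu(D)=1>0$.

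For (iii) $\Rightarrow$ (i) I would proceed in two steps. First, choosing $A = D\cap\{P^n\mathds{1}_B<\mu(B)\}\in\Sigma_D$ in the quantity appearing in (iii) gives $\int_D(P^n\mathds{1}_B-\mu(B))^-\dx\mu\leq \sup_{A\in\Sigma_D}\abs{\mu(\phi^{-n}(A)\cap B)-\mu(A)\mu(B)}$. Since $P^n\mathds{1}_B\geq 0$, the negative part of $P^n\mathds{1}_B-\mu(B)\mathds{1}_D$ is supported on $D$, whence (iii) yields, for every $B$ with $\mu(B)>0$, a set $D$ with $\mu(D)>0$ such that $\int_\Omega(P^n\mathds{1}_B-\mu(B)\mathds{1}_D)^-\dx\mu\to 0$. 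This is exactly condition (ii) of Proposition \ref{prop:convergencemixing} with $c=\mu(B)$, so $(P^n)_{n\in\N}$ converges strongly, say $P^n\to Q$, where $Q$ is a positive projection onto $\fix(P)$ with $Q\mathds{1}=\mathds{1}$ and $\int_\Omega Qf\dx\mu=\int_\Omega f\dx\mu$.

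The second step is to show that $\fix(P)$ consists of constants only, which then forces $Q=\mathds{1}\otimes\mathds{1}$ and hence (i). Here I would first record that $\fix(P)$ is a closed sublattice of $L^1$: if $Pf=f$, then $\abs{f}=\abs{Pf}\leq P\abs{f}$ while $\int_\Omega P\abs{f}\dx\mu=\int_\Omega\abs{f}\dx\mu$, so $P\abs{f}=\abs{f}$ and consequently $f^+,f^-\in\fix(P)$. Assume for contradiction that $\fix(P)$ contained a non-constant function; by the lattice and closedness properties it would then contain a non-trivial indicator $\mathds{1}_C$ with $0<\mu(C)<1$, obtained as the $L^1$-limit of $\bigl(k(f-t\mathds{1})\bigr)^+\wedge\mathds{1}\in\fix(P)$ for a suitable level $t$. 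Applying (iii) to $B=C$ and using $P^n\mathds{1}_C=\mathds{1}_C$ would then give $\mu(A\cap C)=\mu(A)\mu(C)$ for all $A\subseteq D$; testing this against $A=D\cap C$ and $A=D\setminus C$ forces $\mu(D)=0$, contradicting $\mu(D)>0$. Thus $\fix(P)$ is one-dimensional and $Q=\mathds{1}\otimes\mathds{1}$, which is (i).

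The main obstacle I anticipate is the second step — specifically, the passage from a non-constant fixed function to a genuine non-trivial fixed indicator, which relies on the sublattice structure of $\fix(P)$ and a careful approximation argument. By contrast, the reduction to Proposition \ref{prop:convergencemixing} in the first step and the two easy implications are essentially bookkeeping around the identity for $\mu(\phi^{-n}(A)\cap B)$.
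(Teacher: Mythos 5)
Your proof is correct, and its skeleton coincides with the paper's: the same identity $\mu(\phi^{-n}(A)\cap B)-\mu(A)\mu(B)=\int_A(P^n\mathds{1}_B-\mu(B)\mathds{1})\dx\mu$ drives (i) $\Rightarrow$ (ii), the implication (ii) $\Rightarrow$ (iii) is dismissed as trivial, and (iii) $\Rightarrow$ (i) is split into (a) strong convergence of $(P^n)_{n\in\N}$ via Proposition \ref{prop:convergencemixing} with $c=\mu(B)$ (your derivation through $\norm{(P^n\mathds{1}_B-\mu(B)\mathds{1}_D)^-}_{L^1}\to 0$ is the same computation the paper does with an $\eps$-$N$ formulation) and (b) one-dimensionality of $\fix(P)$. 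Where you genuinely diverge is step (b). The paper argues on the measure-theoretic side: it takes a $\phi$-invariant set $\Omega_1$, applies (iii) with $B=\Omega_1$, splits into cases according to whether $\mu(D\cap\Omega_1)>0$, concludes ergodicity, and then invokes the cited equivalence between ergodicity and one-dimensionality of $\fix(P)$ (\cite[Thm 4.4.2]{ding2009}). You instead stay entirely inside $L^1$: you show $\fix(P)$ is a closed sublattice, manufacture a non-trivial fixed indicator $\mathds{1}_C$ from a hypothetical non-constant fixed function via $\bigl(k(f-t\mathds{1})\bigr)^+\wedge\mathds{1}$, and then exploit $P^n\mathds{1}_C=\mathds{1}_C$ so that (iii) degenerates to the $n$-independent statement $\mu(A\cap C)=\mu(A)\mu(C)$ for all $A\in\Sigma_D$, which testing with $A=D\cap C$ and $A=D\setminus C$ kills off. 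Your route is self-contained (no appeal to the ergodicity characterization) but must pay for this with the lattice machinery; note that the closure of $\fix(P)$ under $\wedge\,\mathds{1}$ needs the same Markov trick as the positive-part argument ($P(u\wedge v)\leq u\wedge v$ plus integral preservation), which you should make explicit. The paper's route is shorter given the cited result and its case distinction is the measure-theoretic shadow of your two test sets, since invariance of $\Omega_1$ plays exactly the role of $P^n\mathds{1}_C=\mathds{1}_C$. Both arguments are sound.
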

\begin{proof}
	(i) $\Rightarrow$ (ii): If $P$ is exact, then it follows from
	\begin{align*}
	\sup_{A\in \Sigma} \abs[\big]{\mu(\phi^{-n}(A)\cap B) - \mu(A)\mu(B) } &= \sup_{A\in \Sigma} \abs*{\int_A \bigr( P^n \mathds{1}_B - \mu(B) \mathds{1} \bigr) \dx\mu} \\
	&\leq \int_\Omega \abs[\big]{P^n \mathds{1}_B - \mu(B) \mathds{1}} \dx\mu
	\end{align*}
	that assertion (ii) holds.

	(ii) $\Rightarrow$ (iii): trivial.

	(iii) $\Rightarrow$ (i): Let $B \in \Sigma$ be arbitrary and let $D \in \Sigma$ according to assertion (iii).
	Then for a given $\eps>0$ we find $N\in\N$ such that
	\[ \abs[\big]{- \mu(\phi^{-n}(A)\cap B) + \mu(A)\mu(B)}  < \eps \text{ for all } n\geq N \text{ and } A\in \Sigma\text{, }A\subseteq D .\]
	In particular, we obtain that 
	\[  \mu(\phi^{-n}(A)\cap B) - \mu(A\cap D)\mu(B) > -\eps \text{ for all } n\geq N \text{ and } A\in \Sigma.\]
	Now it follows from Proposition \ref{prop:convergencemixing} that $(P^n)_{n\in\N}$ converges with respect to the strong operator topology.

	In order to prove that $P$ is exact, it remains to show that the fixed space of $P$ is one-dimensional and thus consists of constant functions only.
	In view of \cite[Thm 4.4.2]{ding2009} this is equivalent to ergodicity of the system.
	Let $\Omega_1 \in \Sigma$ be an invariant set, i.e.\ $\phi^{-1}(\Omega_1) \sim \Omega_1$,
	and note that $\Omega_2\coloneqq \Omega\setminus \Omega_1$ is also invariant by \cite[Lem 6.17]{haase2015}.
	Define $B\coloneqq \Omega_1$ and choose $D\in \Sigma$ with $\mu(D)>0$ according to assertion (iii).
	First, we consider the case that the set $A\coloneqq D\cap B$ has positive measure. Since $\phi^{-1}(A)$ is contained in $B$ up to a nullset, we obtain
	from assertion (iii) that
	\[ \mu(A) = \mu(\phi^{-n}(A)\cap B) \to \mu(A)\mu(B) \text{ as }n\to\infty.\]
	Therefore, $\mu(B)=1$ and thus $\mu(\Omega_2)=0$.
	Now assume that $D\cap B$ is a nullset. Since $A\coloneqq D$ is contained in $\Omega_2$ up to a nullset, we obtain from assertion (iii) that
	\[ \mu(\emptyset) = \mu(\phi^{-n}(A)\cap B) \to \mu(A) \mu(B) \text{ as }n\to\infty.\]
	Since $\mu(A)>0$ this implies that $\mu(B)=0$ and thus $\mu(\Omega_2)=1$.
	In both cases we have that $\mu(\Omega_1) \in \{0,1\}$ which proves that the system is ergodic. Therefore, $(P^n)_{n\in\N}$ converges in the strong
	operator topology to $\mathds{1}\otimes \mathds{1}$.
\end{proof}

Let us conclude this section with a remark on bimeasurable systems. 
If the system is mixing, it was shown in \cite[Thm 1]{rice1978} that
\begin{align}
\label{eqn:rice}
\lim_{n\to\infty} \mu(\phi^n(A)\cap B) = \lim_{n\to\infty}\mu(\phi^n(A))\mu(B)
\end{align}
for all $A,B\in \Sigma$.
We show next that the system is exact if and only if the convergence in \eqref{eqn:rice} holds
uniformly in $B\in \Sigma$. 

\begin{lemma}
\label{lem:riceexact}
Let $(\Omega,\Sigma,\mu;\phi)$ be a bimeasurable measure preserving dynamical system. Then $(\Omega,\Sigma,\mu;\phi)$ is exact
if and only if 
\begin{align}
\label{eqn:uniformrice}
\lim_{n\to\infty} \sup_{B\in \Sigma} \abs[\big]{ \mu(\phi^n(A)\cap B)  - \lim_{m\to\infty}\mu(\phi^m(A))\mu(B)} = 0 
\end{align}
for all $A\in \Sigma$.
\end{lemma}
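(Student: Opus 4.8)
The plan is to reduce the statement about the forward images $\phi^n(A)$ to the already-established Theorem~\ref{thm:exactmixing} concerning the Koopman/Perron-Frobenius dynamics, exploiting bimeasurability to pass between $\phi^n(A)$ and $\phi^{-n}$. The central observation I would use is the duality relating forward and backward iterates: for any $A, B \in \Sigma$ we have $\mu(\phi^n(A) \cap B) = \int_B \mathds{1}_{\phi^n(A)} \dx\mu$, and since the system is measure preserving and $\phi$ bimeasurable, quantities like $\mu(\phi^{-n}(C) \cap \phi^n(A))$ can be rewritten as $\mu(C \cap A')$ for suitable $A'$. The first step is therefore to set up this dictionary carefully and to identify $\lim_{m\to\infty}\mu(\phi^m(A))$ as the measure of the minimal invariant superset $A^*$, or equivalently as $\int_\Omega \mathds{1}_A^* \dx\mu$ where the relevant limit is $\mu(A^*)$; in the exact case this forward-image limit equals $1$ whenever $\mu(A)>0$.

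For the direction \emph{exact} $\Rightarrow$ \eqref{eqn:uniformrice}, I would fix $A$ with $\mu(A)>0$ (the case $\mu(A)=0$ being trivial since then $\mu(\phi^n(A))\to 0$ and both terms vanish). Exactness gives $\lim_n \mu(\phi^n(A)) = 1$, so $\lim_m \mu(\phi^m(A))\mu(B) = \mu(B)$, and the supremum in \eqref{eqn:uniformrice} becomes $\sup_{B\in\Sigma}\abs[\big]{\mu(\phi^n(A)\cap B) - \mu(B)}$. Now $\mu(\phi^n(A)\cap B) - \mu(B) = -\mu(B\setminus \phi^n(A)) = -\int_B \mathds{1}_{\Omega\setminus \phi^n(A)}\dx\mu$, whose absolute value is bounded uniformly in $B$ by $\mu(\Omega\setminus\phi^n(A)) = 1 - \mu(\phi^n(A)) \to 0$. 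This direction is thus essentially immediate once exactness supplies the forward-image limit.

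The converse, \eqref{eqn:uniformrice} $\Rightarrow$ \emph{exact}, is where I expect the real work, and my strategy is to verify criterion (ii) of Theorem~\ref{thm:exactmixing}, namely $\lim_n \sup_{C\in\Sigma}\abs[\big]{\mu(\phi^{-n}(C)\cap B) - \mu(C)\mu(B)} = 0$ for each $B\in\Sigma$. The idea is to feed a cleverly chosen set into the uniform hypothesis: applying \eqref{eqn:uniformrice} with $A \coloneqq \phi^{-n}(C)$ relates $\mu(\phi^n(\phi^{-n}(C))\cap B)$ to $\mu(C)\mu(B)$, using that $\phi^n(\phi^{-n}(C)) = C$ up to a nullset when the system is measure preserving and bimeasurable, and that $\lim_m\mu(\phi^m(\phi^{-n}(C)))$ stabilizes appropriately. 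The main obstacle is twofold: first, the order of the quantifiers — in \eqref{eqn:uniformrice} the set $A$ is fixed before the limit, whereas in Theorem~\ref{thm:exactmixing}(ii) we need uniformity over $C$ that itself depends on $n$ — so I must transfer the uniform rate in $B$ into a uniform rate in the backward-image sets, which requires a careful bookkeeping of which variable the uniformity lives on. Second, I must control the term $\lim_m \mu(\phi^m(A))$ cleanly enough that it does not interfere; here I would invoke ergodicity-type reductions as in the proof of Theorem~\ref{thm:exactmixing}, showing first that uniform convergence forces the invariant $\sigma$-algebra to be trivial and then that the forward limits collapse to $\mu(B)$. Once criterion (ii) is confirmed, exactness follows directly from Theorem~\ref{thm:exactmixing}.
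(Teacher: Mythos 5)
Your converse direction contains a genuine gap, and the route you chose cannot be completed as described. Substituting $A \coloneqq \phi^{-n}(C)$ into \eqref{eqn:uniformrice} is not legitimate: the hypothesis fixes $A$ \emph{before} the limit in $n$, so it provides a rate of convergence depending on $A$ and says nothing about quantities in which the pushed-forward set itself varies with $n$. You correctly identify this quantifier obstacle, but the remedy you offer (``ergodicity-type reductions'') cannot close it, because ergodicity --- and even mixing --- is strictly weaker than exactness; showing that the invariant $\sigma$-algebra is trivial does not yield assertion (ii) of Theorem \ref{thm:exactmixing}. The key observation you are missing is that the uniformity in \eqref{eqn:uniformrice} is over $B$, so you may test with sets $B_n$ that \emph{depend on} $n$. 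Set $a \coloneqq \lim_m \mu(\phi^m(A))$, which exists because $\mu(\phi^{m+1}(A)) = \mu\bigl(\phi^{-1}(\phi^{m+1}(A))\bigr) \geq \mu(\phi^m(A))$, and take $B_n \coloneqq \Omega\setminus\phi^n(A)$. Then $\mu(\phi^n(A)\cap B_n)=0$, so the hypothesis forces $a\cdot\mu(B_n) = a\bigl(1-\mu(\phi^n(A))\bigr) \to 0$, i.e.\ $a(1-a)=0$. Hence for every $A\in\Sigma$ either $a=0$, in which case $\mu(A)=0$ by monotonicity, or $a=1$; this is exactly the forward-image characterization of exactness (\cite[Thm 3.2.3]{ding2009}, assertion (i) of the introduction), and no detour through Theorem \ref{thm:exactmixing} is needed. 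This $n$-dependent choice of test set is the entire content of the paper's converse argument.

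Two further errors, both stemming from the fact that $\phi$ is \emph{not} assumed to preserve nullsets. First, in your forward direction the claim that $\mu(A)=0$ implies $\mu(\phi^n(A))\to 0$ is false: in the three-point example of the introduction, $A=\{2\}$ is a nullset while $\phi(A)=\{3\}$ has measure $1/2$. The correct case split is whether $\mu(\phi^n(A))=0$ for \emph{all} $n$ (then \eqref{eqn:uniformrice} is trivial) or $\mu(\phi^{n_0}(A))>0$ for some $n_0$; in the latter case exactness applied to $\phi^{n_0}(A)$ still gives $\lim_n\mu(\phi^n(A))=1$, and your estimate $\sup_{B\in\Sigma}\abs[\big]{\mu(\phi^n(A)\cap B)-\mu(B)} \leq \mu(\Omega\setminus\phi^n(A)) \to 0$ finishes this direction exactly as in the paper. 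Second, your preliminary ``dictionary'' identifying $\lim_m\mu(\phi^m(A))$ with $\mu(A^*)$ is false in general: the sets $\phi^n(A)$ are not nested, so the monotone sequence of their measures need not converge to the measure of their union. For an irrational rotation and $A$ an interval one has $\mu(\phi^n(A))=\mu(A)$ for all $n$, while $A^* \sim \bigcup_n \phi^n(A)$ has full measure. That identification only becomes valid \emph{after} the dichotomy $a\in\{0,1\}$ has been established, so using it as a first step would be circular.
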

\begin{proof}
First note that for every $A\in \Sigma$ it follows from 
$\phi^{-1}(\phi(A)) \supseteq A$ that $\mu(\phi(A)) = \mu(\phi^{-1}(\phi(A))) \geq \mu(A)$. Therefore,
the limit of the monotone sequence $\bigr(\mu(\phi^m(A))\bigl)_{m\in\N}$ always exists.

Now assume that the system is exact and let $A\in \Sigma$. If $\mu(\phi^n(A))=0$ for all $n\in\N_0$, 
then assertion \eqref{eqn:uniformrice} is trivial. Otherwise, we have $\lim \mu(\phi^n(A))=1$
by \cite[Thm 3.2.3]{ding2009} and therefore
\[ \sup_{B\in \Sigma} \abs[\big]{\mu(\phi^n(A)\cap B)- \mu(B)} = \sup_{B\in \Sigma} \mu(B\setminus \phi^n(A)) \leq \mu(\Omega\setminus \phi^n(A)) \to 0\]
as $n$ tends to infinity.

Conversely, assume that \eqref{eqn:uniformrice} holds. Fix $A\in \Sigma$, set $a\coloneqq \lim \mu(\phi^m(A))$ and
let $B_n \coloneqq\Omega\setminus \phi^n(A)$.
Then 
\[ \abs[\big]{\mu(\phi^n(A) \cap B_n) - a\cdot \mu(B_n) } = a\cdot \mu(\Omega\setminus \phi^n(A)) \to a(1-a)\]
as $n$ tends to infinity.
Hence, either $a=0$ and consequently $A$ is a nullset by the preliminary note, or $a=1$. 
By \cite[Thm 3.2.3]{ding2009}, this shows that the system is exact
\end{proof}

\section{Convergence of Dynamics}
\label{sec:main}

In this section we prove our main result, Theorem \ref{thm:main}, which is prepared by a series of lemmas
and preliminary remarks.

\begin{remark}
\label{rem:limitexpectation}
Let us recall that the Ces\`aro averages of the Perron-Frobenius operator $P$ corresponding to a 
measure preserving dynamical system $(\Omega,\Sigma,\mu;\phi)$ always converge strongly to the 
conditional expectation $\E(\argument | \overline{\Sigmainv})=\E(\argument |\Sigmainv)$, see e.g.\ \cite[Rem 13.24]{haase2015}.
This also implies that the Perron-Frobenius operator and the Koopman operator on $L^1(\Omega,\Sigma,\mu)$
share the same set of fixed points, namely $L^1(\Omega,\Sigmainv,\mu)$.
\end{remark}

\begin{lemma}
\label{lem:averagesexpectation}
	Let $(\Omega,\Sigma,\mu;\phi)$ be a measure preserving dynamical system with associated 
	Perron-Frobenius operator $P\in \cL(L^1(\Omega,\Sigma,\mu))$. 
	For each $A\in \Sigma$ and $m\in\N_0$ we have $\{ P^m \mathds{1}_A > 0 \} \subseteq \{ \E(\mathds{1}_A | \Sigmainv) > 0\}$ up to a nullset.
\end{lemma}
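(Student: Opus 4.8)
The plan is to exploit the defining relation \eqref{eqn:Pdef} of $P$ together with the description of the conditional expectation recalled in Remark \ref{rem:limitexpectation}, by testing the nonnegative function $P^m\mathds{1}_A$ against the zero set of $\E(\mathds{1}_A|\Sigmainv)$. Concretely, I would write $g\coloneqq \E(\mathds{1}_A|\Sigmainv)$, which admits a $\Sigmainv$-measurable representative satisfying $0\le g\le 1$, and set $N\coloneqq \{g=0\}$. Then $N\in\overline{\Sigmainv}$, and the assertion $\{P^m\mathds{1}_A>0\}\subseteq\{g>0\}$ up to a nullset is equivalent to showing that $P^m\mathds{1}_A$ vanishes almost everywhere on $N$, i.e.\ that $\int_N P^m\mathds{1}_A\dx\mu = 0$.

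First I would record that $\mathds{1}_A$ itself vanishes a.e.\ on $N$: since $N\in\overline{\Sigmainv}$ and $\E(f|\Sigmainv)=\E(f|\overline{\Sigmainv})$, the defining property of the conditional expectation yields $\mu(A\cap N)=\int_N\mathds{1}_A\dx\mu=\int_N g\dx\mu=0$. Second, I would observe that $N$ is invariant up to a nullset. Indeed, $N$ agrees up to a nullset with the genuinely invariant set $N_0\coloneqq\{\tilde g=0\}\in\Sigmainv$ obtained from the $\Sigmainv$-measurable representative $\tilde g$ of $g$; and because the system is measure preserving, $\phi^{-1}$ maps nullsets to nullsets, since $\mu(\phi^{-1}(E))=\phi_*\mu(E)=\mu(E)$. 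Iterating gives $\phi^{-m}(N)\sim\phi^{-m}(N_0)=N_0\sim N$ for every $m\in\N_0$.

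The computation then closes quickly. Iterating \eqref{eqn:Pdef} gives $\int_N P^m\mathds{1}_A\dx\mu=\int_{\phi^{-m}(N)}\mathds{1}_A\dx\mu=\mu(\phi^{-m}(N)\cap A)$, and by the two observations above this equals $\mu(N\cap A)=0$. As $P^m\mathds{1}_A\ge 0$, it follows that $P^m\mathds{1}_A=0$ a.e.\ on $N=\{\E(\mathds{1}_A|\Sigmainv)=0\}$, which is exactly the claimed inclusion.

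The step I expect to require the most care is the passage through the completion $\overline{\Sigmainv}$: one must make sure that the conditional-expectation identity is legitimately applied to $N=\{g=0\}$, which lies only in $\overline{\Sigmainv}$ rather than in $\Sigmainv$, and that the invariance $\phi^{-m}(N)\sim N$ survives this enlargement. Both points hinge on the measure-preservation hypothesis, which guarantees that $\phi^{-1}$ preserves nullsets; in particular no bimeasurability is needed for this lemma.
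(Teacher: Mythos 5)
Your proof is correct, and it shares the paper's core computation --- integrating $P^m\mathds{1}_A$ over the zero set of $\E(\mathds{1}_A|\Sigmainv)$ and pulling that integral back through $\phi^{-m}$ by iterating \eqref{eqn:Pdef} --- but it reaches the crucial identity $\mu(A\cap N)=0$ by a more elementary route. The paper derives it from the strong convergence of the Ces\`aro averages (Remark \ref{rem:limitexpectation}): each term $\int_B P^m\mathds{1}_A\dx\mu$ equals $\mu(A\cap B)$ by invariance of $B$, and the limit of the averages forces this constant to vanish. You instead observe that $\mu(A\cap N)=\int_N\mathds{1}_A\dx\mu=\int_N\E(\mathds{1}_A|\Sigmainv)\dx\mu=0$ follows directly from the defining property of the conditional expectation, so --- despite your plan citing Remark \ref{rem:limitexpectation} --- no ergodic-theorem input is actually used anywhere in your argument. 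Conversely, you work harder than the paper on the representative issue: the paper fixes the $\Sigmainv$-measurable representative at the outset, so its zero set $B$ lies in $\Sigmainv$ and $\phi^{-m}(B)=B$ holds exactly, with no need for your detour through $\overline{\Sigmainv}$, the auxiliary set $N_0$, and the fact that $\phi^{-1}$ preserves nullsets; that detour is correct (measure preservation does give $\mu(\phi^{-1}(E))=\mu(E)$, so equivalence classes are respected), but avoidable. The net trade-off: your argument uses strictly weaker input where the paper invokes mean ergodicity, at the cost of bookkeeping that the paper's choice of representative renders unnecessary.
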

\begin{proof}
	Let $A\in \Sigma$ and fix a $\Sigmainv$-measurable representative of $\E(\mathds{1}_A | \Sigmainv)$.
	Then for $B\coloneqq \{ \E(\mathds{1}_A | \Sigmainv ) = 0 \} \in \Sigmainv$ we have
	\begin{align*}
	0 =\int_B \E(\mathds{1}_A | \Sigmainv) \dx\mu &\leftarrow \int_B \frac{1}{n} \sum_{m=0}^{n-1} P^m\mathds{1}_A \dx\mu 
	= \frac{1}{n} \sum_{m=0}^{n-1} \int_B P^m\mathds{1}_A \dx\mu \\
	&= \frac{1}{n} \sum_{m=0}^{n-1} \int_{\phi^{-m}(B)}\mathds{1}_A \dx\mu = \mu(A\cap B).
	\end{align*}
	Since the right-hand side does not depend on $n\in\N_0$, it follows that $\int_B P^m\mathds{1}_A \dx\mu = 0$ for all $m\in\N_0$ and therefore
	$\{ P^m \mathds{1}_A > 0\} \subseteq \{ \E(\mathds{1}_A | \Sigmainv) > 0\}$  up to a nullset.
\end{proof}

\begin{remark}
\label{rem:overlineSigmainv}
Let us briefly recall the well-known fact that for a measure preserving dynamical system $(\Omega,\Sigma,\mu;\phi)$ we have
\[ \overline{\Sigmainv} = \{ A \in \Sigma : A\sim \phi^{-1}(A) \}.\]
Indeed, every $A\in \overline{\Sigmainv}$ coincides with some set in $\Sigmainv$ up to a nullset and hence satisfies $\phi^{-1}(A)\sim A$.
Conversely, given a set $A\in\Sigma$ such that $A\sim \phi^{-1}(A)$, it can easily be checked that
\[ A \sim \bigcup_{n\in\N} \bigcap_{k\geq n} \phi^{-k}(A) \in \Sigmainv, \]
which implies that $A\in \overline{\Sigmainv}$.
\end{remark}

As mentioned in the introduction, we always have $\Sigmainv\subseteq \Sigma_\infty$, whereas the converse
inclusion does not hold in general. In fact, $\Sigma_\infty = \Sigma$ whenever $\phi$ is bimeasurable and bijective.
The next lemma gives a sufficient condition for $\overline{\Sigmainv} = \overline{\Sigma_\infty}$ to hold.

\begin{lemma}
\label{lem:sigmainftyinv}
	Let $(\Omega,\Sigma,\mu;\phi)$ be a bimeasurable measure preserving dynamical system such that
	the sequence $(\phi^n(A))$ converges in $\Sigmasim$. Then $\Sigma_\infty \subseteq \overline{\Sigmainv}$.
\end{lemma}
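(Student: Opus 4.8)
The plan is to take an arbitrary $A\in\Sigma_\infty$ and to show that $A\sim\phi^{-1}(A)$; by Remark \ref{rem:overlineSigmainv} this immediately yields $A\in\overline{\Sigmainv}$, which is exactly the desired inclusion $\Sigma_\infty\subseteq\overline{\Sigmainv}$. Note that the convergence hypothesis will only ever be used for this single fixed $A$, and that the argument does not require $\phi$ to preserve nullsets.

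First I would exploit that $A\in\Sigma_\infty=\bigcap_{n}\Sigma_n$ means that for every $n$ there is some $C_n\in\Sigma$ with $A=\phi^{-n}(C_n)$. The key set-theoretic identity is $\phi^{-n}\circ\phi^n\circ\phi^{-n}=\phi^{-n}$, which holds for an arbitrary map because $\phi^n(\phi^{-n}(S))=S\cap\phi^n(\Omega)$ and the extra intersection disappears again under $\phi^{-n}$. Applying this to $S=C_n$ gives the \emph{exact} (not merely almost-everywhere) identity
\[ \phi^{-n}(\phi^n(A)) = A \qquad\text{for all } n. \]
Bimeasurability guarantees $\phi^n(A)\in\Sigma$, so all sets occurring here are measurable.

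Next, using both $A=\phi^{-n}(\phi^n(A))$ and $A=\phi^{-(n+1)}(\phi^{n+1}(A))$, I would rewrite $\phi^{-1}(A)=\phi^{-(n+1)}(\phi^n(A))$ and then compute, using that preimages commute with symmetric differences and that $\phi$, hence $\phi^{n+1}$, is measure preserving,
\[ \mu\bigl(A\triangle\phi^{-1}(A)\bigr) = \mu\bigl(\phi^{-(n+1)}(\phi^{n+1}(A)\triangle\phi^n(A))\bigr) = \mu\bigl(\phi^{n+1}(A)\triangle\phi^n(A)\bigr). \]
The left-hand side does not depend on $n$, while the right-hand side equals the distance $d(\phi^{n+1}(A),\phi^n(A))$ in the measure algebra $\Sigmasim$. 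Since the sequence $(\phi^n(A))$ converges in $\Sigmasim$ by hypothesis, both $\phi^n(A)$ and $\phi^{n+1}(A)$ tend to the same limit, so this distance tends to $0$ as $n\to\infty$. Letting $n\to\infty$ therefore forces $\mu(A\triangle\phi^{-1}(A))=0$, i.e.\ $A\sim\phi^{-1}(A)$, completing the proof.

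I expect the only subtle point to be establishing the exact identity $\phi^{-n}(\phi^n(A))=A$ for $A\in\Sigma_\infty$; once this is available, the remainder is a short measure-preserving computation. It is worth double-checking the direction of the preimage/image composition, since the analogous identity $\phi^n(\phi^{-n}(\argument))$ fails to be the identity in general (it only returns the set intersected with the range of $\phi^n$), and it is precisely the membership $A\in\Sigma_n$ that lets us apply the correct version.
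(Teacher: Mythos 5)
Your proof is correct and takes essentially the same route as the paper's: both reduce, via Remark \ref{rem:overlineSigmainv}, to showing $A \sim \phi^{-1}(A)$ by writing $A$ and $\phi^{-1}(A)$ as $\phi^{-(n+1)}$-preimages of consecutive terms of the convergent (hence Cauchy) image sequence, and then using measure preservation to transfer the distance. Your exact set identity $\phi^{-n}(\phi^n(A))=A$ is a slightly cleaner substitute for the paper's step establishing $\phi^n(A)\sim A_n$ for the witnesses $A_n$ of $A\in\Sigma_n$, but the underlying mechanism is identical.
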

\begin{proof}
	Let $B\in \Sigma_\infty$. Then for every $n\in\N$ there exists $A_n \in\Sigma$ such that $B=\phi^{-n}(A_n)$.
	Consequently, $\phi^n(B) \subseteq A_n$ for all $n\in\N$. Since $B\subseteq \phi^{-n}(\phi^n(B))$ we have
	\[ \mu(\phi^n(B)) = \mu(\phi^{-n}(\phi^n(B))) \geq \mu(B) = \mu(\phi^{-n}(A_n))=\mu(A_n) \]
	for all $n\in\N$. This shows that $\phi^n(B) \sim A_n$ for each $n\in\N$. Hence, by assumption, $\lim A_n \eqqcolon A$ exists in $\Sigmasim$.
	Since $d(\phi^{-1}(C),\phi^{-1}(D)) = d(C,D)$ for all $C,D\in \Sigma$, the mapping
	$\phi^{-1} \colon \Sigmasim \to \Sigmasim$ is an isometry and in particular continuous.
	Hence,
	\begin{align*}
		d(\phi^{-1}(B),B) &= d\bigl(\phi^{-(n+1)}(A_n), \phi^{-n}(A_n)\bigr) \\
		&\leq d\bigl(\phi^{-(n+1)}(A_n),\phi^{-(n+1)}(A_{n+1})\bigr) + d\bigl(\phi^{-(n+1)}(A_{n+1}),\phi^{-n}(A_n)\bigr) \\
		&= d(A_n,A_{n+1}) + d(B,B) \to 0 \quad (n\to\infty).
	\end{align*}
	This shows that $B\sim \phi^{-1}(B)$ and thus, by Remark \ref{rem:overlineSigmainv}, $B\in \overline{\Sigmainv}$.
\end{proof}

The next lemma will be used to identify the limit of the sequence $(\phi^n(A))_{n\in\N}$ in $\Sigmasim$.
Beside this, it shows measurability of minimal invariant supersets.

\begin{lemma}
\label{lem:A*measurable}
	Let $(\Omega,\Sigma,\mu;\phi)$ be a bimeasurable measure preserving dynamical system.  Then the minimal invariant superset 
	of any $A\in \Sigma$  is itself measurable, i.e.\ $A^* \in \Sigmainv$, and is given by
	\[ A^* = \bigcup_{m\in\N_0} \phi^{-m} \biggl( \bigcup_{n\in\N_0} \phi^n(A) \biggr) \sim \bigcup_{n\in\N_0} \phi^n(A).\]
\end{lemma}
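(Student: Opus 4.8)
The plan is to set $S \coloneqq \bigcup_{n\in\N_0}\phi^n(A)$ and $B \coloneqq \bigcup_{m\in\N_0}\phi^{-m}(S)$ and to show, in this order, that $B$ is a measurable invariant superset of $A$, that it is the smallest such set — so that $A^* = B$ — and finally that $B \sim S$. Measurability is immediate from bimeasurability: each forward image $\phi^n(A)$ lies in $\Sigma$, hence so does the countable union $S$, and then each preimage $\phi^{-m}(S)$ and their union $B$ are measurable as well. The computational heart of the argument is the single observation that $\phi(S) = \bigcup_{n\geq 1}\phi^n(A) \subseteq S$, which — since $\phi$ need not be injective, I will phrase everything through preimages — is equivalent to $S \subseteq \phi^{-1}(S)$.

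For invariance I will use that preimages commute with unions, so that $\phi^{-1}(B) = \bigcup_{m\geq 1}\phi^{-m}(S)$ and therefore $B = S \cup \phi^{-1}(B)$. Because $S \subseteq \phi^{-1}(S) \subseteq \phi^{-1}(B)$, the summand $S$ is already absorbed and $\phi^{-1}(B) = B$, i.e.\ $B \in \Sigmainv$; clearly $A \subseteq S \subseteq B$. For minimality I will take any (not necessarily measurable) set $C \supseteq A$ with $\phi^{-1}(C) = C$: this forces $\phi(C) \subseteq C$ and $\phi^{-m}(C) = C$ for all $m$, so that $\phi^n(A) \subseteq \phi^n(C) \subseteq C$ gives $S \subseteq C$ and then $\phi^{-m}(S) \subseteq \phi^{-m}(C) = C$ gives $B \subseteq C$. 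Since $B$ is itself an invariant superset of $A$, intersecting over all such $C$ yields $A^* = B$, which simultaneously establishes measurability $A^* \in \Sigmainv$ and the stated formula.

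It remains to identify $A^*$ with $S$ up to a nullset. Applying the map $\phi^{-m}$ to the inclusion $S \subseteq \phi^{-1}(S)$ shows that $(\phi^{-m}(S))_{m\in\N_0}$ is an increasing sequence, so $B$ is its increasing union. Continuity of measure from below, together with measure-preservation $\mu(\phi^{-m}(S)) = \mu(S)$, then gives $\mu(B) = \lim_{m\to\infty}\mu(\phi^{-m}(S)) = \mu(S)$; as $S \subseteq B$ in a finite measure space, this forces $\mu(B \setminus S) = 0$, that is $A^* \sim S$.

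I do not expect a genuine obstacle here: the argument is elementary set theory plus one appeal to continuity of measure. The only point demanding care is that $\phi$ is merely bimeasurable and not invertible, so every manipulation must run through $\phi^{-1}$ — which respects unions, intersections and complements — rather than through images; in particular the equivalence $\phi(S)\subseteq S \iff S\subseteq\phi^{-1}(S)$ is the pivot of the whole proof. The step I would check most carefully is the monotonicity of $(\phi^{-m}(S))_{m}$ in the final paragraph, since it is exactly what turns the set-theoretic union defining $B$ into a measure identity.
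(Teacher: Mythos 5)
Your proof is correct and follows essentially the same route as the paper: the same two-stage construction ($S=\bigcup_n\phi^n(A)$, then $B=\bigcup_m\phi^{-m}(S)$), the same minimality argument against arbitrary (not necessarily measurable) invariant supersets, and the same use of measure preservation to identify $B\sim S$. The only differences are presentational — you derive invariance via the absorption identity $B=S\cup\phi^{-1}(B)$ where the paper uses the increasing sequence $(\phi^{-m}(S))_m$, and you spell out the continuity-from-below step that the paper compresses into one sentence.
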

\begin{proof}
	Let $A\in \Sigma$ and set
	\[ B \coloneqq \bigcup_{n\in\N_0} \phi^n (A).\] 
	Then $A\subseteq B$ and obviously $\phi(B) \subseteq B$. Hence, the sequence $(\phi^{-n}(B))_{n\in\N_0}$
	is increasing and therefore,
	\[ D\coloneqq \bigcup_{n\in\N_0} \phi^{-n}(B)  \]
	is $\phi$-invariant, i.e.\ $\phi^{-1}(D) = D$.  Clearly, $A\subseteq D$ and $D\in \Sigmainv$.

	Now let $A\subseteq F\subseteq \Omega$ such that $\phi^{-1}(F)=F$. 
	Since $\phi^n(A) \subseteq \phi^n(F) = \phi^n(\phi^{-n}(F)) \subseteq F$ for every $n\in\N_0$, we know that $B\subseteq F$.
	Therefore, $\phi^{-n}(B) \subseteq \phi^{-n}(F) = F$ for every $n\in\N_0$, which implies that $D\subseteq F$. Since $F$ was arbitrary, this 
	show that $D$ is in fact the minimal invariant superset of $A$, denoted by $A^*$.

	Finally, we may conclude from the measure preservation that $\phi^{-1}(B) \sim B$ which implies that $D \sim B$.
\end{proof}

One implication of Theorem \ref{thm:main} is based on the following results by Lin from \cite[Thm 4.3]{lin1971};
see also \cite[Thm 2]{derriennic1976} for a generalization.

\begin{theorem}
\label{thm:Pnfconvergence}
	Let $X$ be a Banach space and $P\in \cL(X)$ be a contractive linear operator. We denote by $T\in \cL(X^*)$ the adjoint of $P$
	and set $B_{X^*}\coloneqq \{ g \in X^* : \norm{g}\leq 1\}$. Then for any $f\in X$ the following are equivalent:
	\begin{enumerate}[(i)]
		\item $\lim P^n f = 0$.
		\item $\applied{f}{h} = 0$ for all $h\in \cF\coloneqq \bigcap_{n\in\N} T^nB_{X^*}$.
	\end{enumerate}
\end{theorem}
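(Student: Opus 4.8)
The plan is to prove the two implications separately; essentially all the content lies in (ii) $\Rightarrow$ (i), while (i) $\Rightarrow$ (ii) is a one-line estimate. For the easy direction, suppose $\lim P^n f = 0$ and let $h \in \cF$. By definition of $\cF$, for every $n\in\N$ there is some $g_n \in B_{X^*}$ with $h = T^n g_n$, whence $\applied{f}{h} = \applied{f}{T^n g_n} = \applied{P^n f}{g_n}$ and therefore $\abs{\applied{f}{h}} \leq \norm{P^n f}\,\norm{g_n} \leq \norm{P^n f} \to 0$. Thus $\applied{f}{h}=0$ for every $h\in\cF$.

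For (ii) $\Rightarrow$ (i) I would argue by contradiction using weak${}^*$-compactness. Since $P$ is contractive, so is $T=P^*$, and hence $\bigl(\norm{P^n f}\bigr)_{n}$ is nonincreasing; let $\alpha \coloneqq \lim_{n} \norm{P^n f}$ and assume $\alpha>0$. For each $n$ I choose, by Hahn--Banach, a functional $g_n \in B_{X^*}$ with $\applied{P^n f}{g_n} = \norm{P^n f}$ and put $h_n \coloneqq T^n g_n$. Writing $T^n = T^m T^{n-m}$ and using $T B_{X^*} \subseteq B_{X^*}$, one sees that $h_n \in T^m B_{X^*}$ for all $m \leq n$, while $\applied{f}{h_n} = \applied{P^n f}{g_n} = \norm{P^n f} \geq \alpha$.

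The crux is to produce a limit object inside $\cF$. By Banach--Alaoglu $B_{X^*}$ is weak${}^*$-compact, and since $T=P^*$ is weak${}^*$-continuous and contractive, the sets $T^m B_{X^*}$ are weak${}^*$-compact and decrease to $\cF$. As the sequence $(h_n)$ lies in the compact set $B_{X^*}$, it has a weak${}^*$-cluster point $h$. For each fixed $m$ the tail $(h_n)_{n\geq m}$ lies in the weak${}^*$-closed set $T^m B_{X^*}$, so the cluster point satisfies $h \in T^m B_{X^*}$; as $m$ was arbitrary, $h \in \cF$. Finally, the evaluation $g \mapsto \applied{f}{g}$ is weak${}^*$-continuous, so $\applied{f}{h}$ is a cluster point of the numbers $\applied{f}{h_n} \geq \alpha$, giving $\applied{f}{h} \geq \alpha > 0$. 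This contradicts (ii), whence $\alpha = 0$, i.e.\ $\lim P^n f = 0$.

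The hard part is concentrated entirely in this last paragraph: one must carry the cluster-point argument out in the possibly non-metrizable weak${}^*$ topology, where the two clean ingredients are that each $T^m B_{X^*}$ is weak${}^*$-closed (which follows from weak${}^*$-continuity of the adjoint power $T^m$ together with compactness of $B_{X^*}$) and that the point evaluations at elements of $X$ are precisely the weak${}^*$-continuous functionals on $X^*$, which is what allows the inequality $\applied{f}{h_n}\geq \alpha$ to pass to the cluster point. Everything else is bookkeeping with the contractivity $\norm{T}\leq 1$.
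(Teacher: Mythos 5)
Your proof is correct. For comparison: the paper does not prove this statement at all --- it is quoted as Lin's theorem (\cite[Thm 4.3]{lin1971}, see also \cite[Thm 2]{derriennic1976}) and used as a black box in the proof of Theorem \ref{thm:main}. So what you have done is supply a self-contained argument for the cited result, and your argument is essentially the standard proof of Lin's theorem: norm $P^n f$ by Hahn--Banach functionals $g_n$, push them forward to $h_n \coloneqq T^n g_n$, and extract a weak${}^*$-cluster point inside the nested weak${}^*$-compact sets $T^m B_{X^*}$. The two places where such an argument usually goes wrong in the non-metrizable weak${}^*$ topology are exactly the ones you handle correctly: you work with cluster points of the sequence $(h_n)$ (which exist in any compact space) rather than claiming a convergent subsequence, and you use that a cluster point of a sequence eventually contained in the weak${}^*$-closed set $T^m B_{X^*}$ must lie in that set, with weak${}^*$-closedness coming from compactness (continuous image of $B_{X^*}$ under the weak${}^*$-continuous map $T^m$) plus Hausdorffness. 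The remaining ingredients --- $\applied{f}{T^n g_n} = \applied{P^n f}{g_n}$, monotonicity of $\norm{P^n f}$ under contractivity, and weak${}^*$-continuity of evaluation at $f$ passing the bound $\applied{f}{h_n} \geq \alpha$ to the cluster point --- are all stated and used correctly, including the easy implication (i) $\Rightarrow$ (ii).
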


Now we turn to the proof of Theorem \ref{thm:main}. 

\begin{proof}[Proof of Theorem \ref{thm:main}]
	(i) $\Rightarrow$ (iii): This implication follows from Lemma \ref{lem:sigmainftyinv} and the abovementioned remark.

	(iii) $\Rightarrow$ (ii): Assume that $\Sigma_\infty \subseteq \overline{\Sigmainv}$.
	Let $f\in L^1(\Omega,\Sigma,\mu)$ and define $\tilde f \coloneqq f-\E(f| \Sigmainv)$. By Remark \ref{rem:limitexpectation}
	we obtain that $\lim P^nf = \E(f| \Sigmainv)$ if and only if $\lim P^n \tilde f = 0$. 
	In view of Theorem \ref{thm:Pnfconvergence} it thus suffices to show that $\applied{\tilde f}{h} = 0$ for each $h\in \bigcap_{n\in\N}T^nB_{L^\infty}$.

	So let $h\in \bigcap T^n B_{L^\infty}$. Then for each $n\in\N$ there exists $g_n \in B_{L^\infty}$ such that $h = g_n \circ \phi^n$ in the sense of $L^\infty$.
	Let us fix $\Sigma$-measurable representatives of $h$ and all $g_n$ and let $A\in \Sigma$.
	Then for every $n\in\N$ we have $h^{-1}(A) \sim \phi^{-n}(g_n^{-1}(A)) \in \Sigma_n$. 
	Since $\Sigma_\infty \subseteq \overline{\Sigmainv}$ we obtain that
	\[ h^{-1}(A) \sim \bigcap_{n\in\N}  \phi^{-n}(g_n^{-1}(A)) \in \Sigma_\infty \subseteq \overline{\Sigmainv}.\]
	Due to the completeness of $\overline{\Sigmainv}$, this already implies that $h^{-1}(A) \in \overline{\Sigmainv}$.
	We hence showed that any $h\in \bigcap T^nB_{L^\infty}$ is $\overline{\Sigmainv}$-measurable.

	By definition of the conditional expectation we know that
	\[ \applied{\tilde f}{\mathds{1}_B} = \int_B f \dx\mu - \int_B \E(f| \Sigmainv) \dx\mu = 0 \]
	for all $B\in \overline{\Sigmainv}$. By linearity and density it thus follows that $\applied{\tilde f}{h} = 0$ for all $h\in \bigcap_{n\in\N} T^n B_{L^\infty}$.
	As said in the beginning, assertion (ii) now follows from Theorem \ref{thm:Pnfconvergence}.

	(ii) $\Rightarrow$ (i): Assume that $(P^n)_{n\in\N}$ is strongly convergent and let $A\in \Sigma$.
	We fix $\Sigma$-measurable representatives $f_n$ of $P^n\mathds{1}_A$ and a $\Sigmainv$-measurable representative $g$ of $\E(\mathds{1}_A | \Sigmainv)$.
	Let $S\coloneqq \supp g \coloneqq \{ g>0 \}$. For a given $\eps>0$ we may choose a constant $c>0$ small 
	enough such that $\mu(S\cap \{g< c\}) < \eps$. By assumption and Remark \ref{rem:limitexpectation} we 
	find $N\in\N$ such that $\norm{f_n -g}_{L^1} < \eps\cdot c$ for all $n\geq N$.
	Since $\{ f_n > 0 \} \subseteq \{g>0\}$ up to  a nullset by Lemma \ref{lem:averagesexpectation} and
	\[ c\cdot \mu(\{f_n = 0\}\cap \{ g\geq c\}) \leq \norm{f_n -g}_{L^1} < \eps\cdot c \]
	for all $n\geq N$, it follows that
	\begin{align*}
	&d(\{ f_n > 0\}, \{ g>0\} ) = \mu ( S \setminus \{f_n > 0\}) \\
	&\qquad = \mu(S\cap \{g< c\} \setminus \{f_n >0 \}) + \mu (S\cap \{g\geq c\} \setminus \{f_n>0\}) \leq 2\eps
	\end{align*}
	for all $n\geq N$. This shows that $\supp P^n\mathds{1}_A$ converges to $\supp \E(\mathds{1}_A | \Sigmainv)$ in $\Sigmasim$.
	Furthermore, since $P$ is a Markov operator and $A\subseteq \phi^{-n}(\phi^n(A))$, we have
	\[ \mu(A) = \int_\Omega P^n \mathds{1}_A \dx\mu \geq \int_{\phi^n(A)} P^n\mathds{1}_A \dx\mu  
	= \int_{\phi^{-n}(\phi^n(A))} \mathds{1}_A \dx\mu \geq \int_A\mathds{1}_A \dx\mu = \mu(A)\]
	for all $n\in\N_0$; this implies that $\supp P^n\mathds{1}_A \subseteq \phi^n(A)$ up to a nullset.

	In summary, we proved that $\phi^n(A)$ is (up to a nullset) a superset of the support of $P^n\mathds{1}_A$, 
	which converges to the support of $\E(\mathds{1}_A| \Sigmainv)$ in $\Sigmasim$.
	By Lemma \ref{lem:averagesexpectation}, we also know that $A$, which is nothing but the support of $P^0\mathds{1}_A$,
	is contained in the support of $\E(\mathds{1}_A | \Sigmainv)$ up to a nullset.
	Applying this observation to each of the sets $\phi^m(A)$, we conclude that for any $m\in \N_0$ and $\eps>0$ there exists
	$k\in \N$ such that $\mu\bigl(\phi^m(A) \setminus \phi^n(\phi^m(A))\bigr) < \eps$ whenever $n\geq k$.
	Now we show that the sequence $(\phi^n(A))_{n\in\N}$ converges to 
	\[ U \coloneqq \bigcup_{n\in\N_0} \phi^n(A)\]
	in $\Sigmasim$. Fix $\eps>0$ and $N\in\N$ such that
	\[ \mu\biggl( U \setminus \bigcup_{0\leq m\leq N} \phi^m(A) \biggr) < \eps.\]
	By what we have just seen, there exists  $k\in\N$ such that $\mu\bigl(\phi^m(A) \setminus \phi^{n+m}(A)\bigr)<\eps/N$ for
	every $n\geq k$ and all $1\leq m\leq N$. Therefore,
	\[ \mu\bigl( U \setminus \phi^n(A) \bigr)  \leq \mu\biggl(\bigcup_{1\leq m\leq N} \phi^m(A) \setminus \phi^n(A) \biggr) + \eps 
	\leq \eps + \sum_{m=1}^N \mu\bigl(\phi^m(A)\setminus \phi^n(A)\bigr) \leq 2\eps \]
	for all $n\geq k+N$. Since clearly $\phi^n(A) \subseteq U$ for every $n\in\N$, this proves that
	$\lim \phi^n(A) = U$ in $\Sigmasim$ as claimed.
	It now follows from Lemma \ref{lem:A*measurable} that 
	\[ U \sim \bigcup_{n\in\N_0} \phi^{-n}(U) = A^*, \]
	which completes the proof.
\end{proof}

\bibliographystyle{abbrv}
\bibliography{../analysis}

\begin{thebibliography}{10}

\bibitem{bogachev2007}
V.~I. Bogachev.
\newblock {\em Measure theory. {V}ol. {I}, {II}}.
\newblock Springer-Verlag, Berlin, 2007.

\bibitem{derriennic1976}
Y.~Derriennic.
\newblock Lois ``z\'ero ou deux'' pour les processus de {M}arkov.
  {A}pplications aux marches al\'eatoires.
\newblock {\em Ann. Inst. H. Poincar\'e Sect. B (N.S.)}, 12(2):111--129, 1976.

\bibitem{ding2009}
J.~Ding and A.~Zhou.
\newblock {\em Statistical properties of deterministic systems}.
\newblock Tsinghua University Texts. Springer-Verlag, Berlin; Tsinghua
  University Press, Beijing, 2009.

\bibitem{ding2003}
Y.~Ding.
\newblock The asymptotic behavior of {F}robenius-{P}erron operator with local
  lower-bound function.
\newblock {\em Chaos Solitons Fractals}, 18(2):311--319, 2003.

\bibitem{haase2015}
T.~Eisner, B.~Farkas, M.~Haase, and R.~Nagel.
\newblock {\em {O}perator {T}heoretic {A}spects of {E}rgodic {T}heory}, volume
  272 of {\em Graduate Texts in Mathematics}.
\newblock Springer-Verlag, New York, 2015.

\bibitem{foguel1969}
S.~R. Foguel.
\newblock {\em The ergodic theory of {M}arkov processes}.
\newblock Van Nostrand Mathematical Studies, No. 21. Van Nostrand Reinhold Co.,
  New York-Toronto, Ont.-London, 1969.

\bibitem{gerlach2016}
M.~Gerlach and J.~Gl\"uck.
\newblock {L}ower {B}ounds and the {A}symptotic {B}ehaviour of {P}ositive
  {O}perator {S}emigroups.
\newblock {\em Ergod.\ Th.\ \& Dynam.\ Sys.}, to appear.

\bibitem{lasota1983}
A.~Lasota.
\newblock Statistical stability of deterministic systems.
\newblock In {\em Equadiff 82 ({W}\"urzburg, 1982)}, volume 1017 of {\em
  Lecture Notes in Math.}, pages 386--419. Springer, Berlin, 1983.

\bibitem{lasota1982}
A.~Lasota and J.~A. Yorke.
\newblock Exact dynamical systems and the {F}robenius-{P}erron operator.
\newblock {\em Trans. Amer. Math. Soc.}, 273(1):375--384, 1982.

\bibitem{lin1971}
M.~Lin.
\newblock Mixing for {M}arkov operators.
\newblock {\em Z. Wahrscheinlichkeitstheorie und Verw. Gebiete}, 19:231--242,
  1971.

\bibitem{mauldin1981}
R.~D. Mauldin.
\newblock Bimeasurable functions.
\newblock {\em Proc. Amer. Math. Soc.}, 83(2):369--370, 1981.

\bibitem{purves1966}
R.~Purves.
\newblock Bimeasurable functions.
\newblock {\em Fund. Math.}, 58:149--157, 1966.

\bibitem{rice1978}
R.~E. Rice.
\newblock On mixing transformations.
\newblock {\em Aequationes Math.}, 17(1):104--108, 1978.

\end{thebibliography}

\end{document}